\newproof{proof}{Proof}
\newproof{pot}{Proof of Theorem \ref{thm2}}
\newtheorem{theorem}{Theorem}
\begin{document}

\title{Backlund transformations of curves in the Galilean and
pseudo-Galilean spaces}
\author[rvt]{S\"{u}leyman Cengiz\corref{cor1}\fnref{fn1}}
\ead{suleymancengiz@karatekin.edu.tr}
\author[focal]{Nevin G\"{u}rb\"{u}z\fnref{fn2}}
\ead{toprak400@gmail.com}
\address[rvt]{Karatekin University, Mathematics Department, \c{C}ank{\i}r{\i} }
\address[focal]{Eski\c{s}ehir Osmangazi University, Mathematics Department, Eski\c{s}ehir}
\begin{abstract}
Backlund transformations of admissible curves in the
Galilean 3-space and pseudo-Galilean 3-space and also spatial
Backlund transformations of space curves in Galilean 4-space preserve the torsions under certain assumptions.
\end{abstract}
\begin{keyword}
Backlund transformations, pseudo-Galilean space, Galilean space
\MSC[2010]53A35
\end{keyword}

\maketitle

\section{\textbf{Introduction}}

In the 1890s Bianchi, Lie, and finally Backlund looked at what are now
called Backlund transformations of surfaces. In modern parlance, they begin
with two surfaces in Euclidean space in a line congruence: there is a
mapping between the surfaces M$_{1}$ and M$_{2}$ such that the line through
any two corresponding points is tangent to both surfaces. Backlund proved
that if a line congruence satisfied two additional conditions, that the line
segment joining corresponding points has constant length, and that the
normals at corresponding points form a constant angle, then the two surfaces
are necessarily surfaces of constant negative curvature. He was also able to
show that a Backlund transformation is integrable, in the sense that given a
point on a surface of constant negative curvature and a tangent line segment
at that point, a new surface of constant negative curvature can be found,
containing the endpoint of the line segment, that is a Backlund transform of
the original surface.

The classical Backlund theorem studies the transformation of surfaces of
constant negative curvature in $\mathbb{R}^{3}$ by realizing them as the
focal surfaces of a pseudo-spherical line congruence. The integrability
theorem says that we can construct a new surface in $\mathbb{R}^{3}$ with
constant negative curvature from a given one. In \cite{tenenblat1} Tenenblat and Terng
established a high dimension generalization of Backlund's theorem which is
very interesting both for physical and mathematical reasons. 
After that Chern and Terng customized Backlund theorem for affine surfaces 
\cite{chern}. By the same year this transformation was reduced to
corresponding asymptotical lines by Terng \cite{terng} and following years
Tenenblat expanded the Backlund transformation of two surfaces in $\mathbb{R}
_{1}^{3}$ to space forms\cite{tenenblat2}. In 1990 Palmer constructed a
Backlund transformation between spacelike and timelike surfaces of constant
negative curvature in $\mathbb{E}_{1}^{3}$ \cite{palmer}. At that decade
some researchers gave Backlund transformations on Weingarten surfaces \cite
{buyske, tian, chen, cao}.

In 1998 Calini and Ivey \cite{calini} proposed a geometric realization of the Backlund
Transformation for the sine-Gordon equation in the context of curves of
constant torsion. Since the asymptotic lines on a pseudospherical surface
have constant torsion, the Backlund transformation can be restricted to get
a transformation that carries constant torsion curves to constant torsion
curves. Later the converse of the idea was proved and
generalized for the n-dimensional case by Nemeth \cite{nemeth1}. In \cite{nemeth2}
Nemeth studied a similar concept for constant torsion curves in the
3-dimensional constant curvature spaces. Shief and Rogers
used an analogue of the classical Backlund transformation for the generation
of soliton surfaces \cite{shief}. In \cite{chou} Chou, Kouhua and Yongbo
obtained the Backlund transformation on timelike surfaces with constant mean
curvature in $\mathbb{R}_{1}^{2}.$ Zuo, Chen, Cheng studied Backlund
theorems in three dimensional de Sitter space and anti-de Sitter space \cite
{zuo}. Abdel-Baky presented the Minkowski versions of the Backlund theorem
and its application by using the method of moving frames \cite{abdel}. G\"{u}rb\"{u}z 
studied Backlund transformations in $\mathbb{R}_{1}^{n}$ \cite
{gurbuz}. Using the same method \"{O}zdemir and \c{C}\"{o}ken have
studied Backlund transformations of non-lightlike constant torsion curves in
Minkowski 3-space\cite{ozdemir}.

In this paper we show that a restriction of Backlund theorem on space curves satisfying the given three conditions preserves the torsions of the curves in Galilean and pseudo-Galilean spaces. For the necessary definitions and theorems of Galilean and pseudo-Galilean spaces we refered \cite{yaglom, suha, divjak, pavkovic}

\section{\textbf{Preliminaries}}

The Galilean space $\mathbb{G}^{3}$ is the three dimensional real affine
space with the absolute figure $\{w,f,I\}$ , where $w$ is the ideal plane, $
\ f$ is a line in $w$ and $I$ is the fixed elliptic involution of points of $
f.$

The scalar product of two vectors $X=(a_{1},a_{2},a_{3})$ and $
Y=(b_{1},b_{2},b_{3})$ in $\mathbb{G}^{3}$ is defined by
\begin{center}
$<X,Y>_{G}=\left\{ 
\begin{array}{ll}
a_{1}.b_{1} & a_{1}\neq 0\text{ or }b_{1}\neq 0 \\ 
a_{2}\text{\textperiodcentered }b_{2}+a_{3}\text{\textperiodcentered }b_{3},
& a_{1}=0\text{ and }b_{1}=0
\end{array}
\right. $\\[0pt]
\end{center}

An admissible curve $\alpha :I\subset R\rightarrow G^{3}$ of the class $C^{\
r}$ (r $\geq $ 3) in the Galilean space $G^{3}$ is defined by the
parametrization\ 
\begin{equation*}
\alpha (s)=(s,x(s),y(s))
\end{equation*}
where s is the arc length of $\alpha $ with the differential form $ds=dx$.
The curvature $\kappa (s)$ and the torsion $\mathbb{\tau }(s)$ of an
admissible curve in $G^{3}$ are given by $\kappa (s)=\sqrt{y^{\prime \prime
2}(s)-z^{\prime \prime 2}(s)}\ $and $\mathbb{\tau }(s)=(\det (\alpha
^{\prime }(s),\alpha ^{\prime \prime }(s),\alpha ^{\prime \prime \prime
}(s))/\kappa ^{2}(s)$ respectively. The associated moving trihedron is given
by
\begin{eqnarray*}
E_{1} &=&\alpha ^{\prime }(s)=(1,x^{\prime }(s),y^{\prime }(s)) \\
E_{2} &=&\frac{(0,x^{\prime \prime }(s),y^{\prime \prime }(s))}{\sqrt{
x^{\prime \prime 2}(s)+y^{\prime \prime 2}(s)}} \\
E_{3} &=&\frac{(0,-y^{\prime \prime }(s),x^{\prime \prime }(s))}{\sqrt{
x^{\prime \prime 2}(s)+y^{\prime \prime 2}(s)}}
\end{eqnarray*}
Then the  Frenet formulas in the Galilean space $G^{3}$ becomes:
\begin{eqnarray}
\mathbf{E}_{1}^{\prime } &=&\kappa \mathbf{E}_{2}  \notag \\
\mathbf{E}_{2}^{\prime } &=&\tau \mathbf{E}_{3}   \label{frenet1} \\
\mathbf{E}_{3}^{\prime } &=&-\tau \mathbf{E}_{2}  \notag
\end{eqnarray}
\section{\textbf{Backlund transformations of admissible curves in the
Galilean space G}$^{3}$}
\begin{theorem}
Suppose that $\psi $ is a transformation between two admissible curves $
\alpha $ and $\widetilde{\alpha }$ in the Galilean space $\mathbb{G}^{3}$
with $\widetilde{\alpha }=\psi (\alpha )$ such that in the corresponding
points we have:
\begin{enumerate}[i.]
\item The line segment $[\widetilde{\alpha }(s)\alpha (s)]$ at the
intersection of the osculating planes of the curves has constant length $r$
\item The distance vector $\widetilde{\alpha }(s)-\alpha (s)$ has the same
angle $\gamma \neq \frac{\pi }{2}$ with the tangent vectors of the curves
\item The binormals of the curves have the same constant angle $\phi
\neq 0$.
\end{enumerate}
Then these curves are congruent with the curvatures and torsions
\begin{eqnarray*}
\tilde{\kappa} &=&\kappa =-2\frac{d\gamma }{ds} \\
\widetilde{\tau } &=&\tau =\frac{\sin \phi }{r}
\end{eqnarray*}
and the transformation of the curves is given by
\begin{equation*}
\widetilde{\alpha }=\alpha +\frac{2C}{\tau ^{2}+C^{2}}(\cos \gamma \mathbf{E}
_{1}+\sin \gamma \mathbf{E}_{2})
\end{equation*}
where $C=\tau \tan \left( \frac{\phi }{2}\right) $ is a constant and $\gamma 
$ is a solution of the differential equation
\begin{equation*}
\frac{d\gamma }{ds}=\tau \sin \gamma \tan \frac{\phi }{2}
\end{equation*}
\end{theorem}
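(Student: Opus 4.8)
The plan is to run the classical moving-frame proof of Backlund's theorem, adapted to the degenerate Galilean metric, using only the trihedron $\{\mathbf{E}_{1},\mathbf{E}_{2},\mathbf{E}_{3}\}$ and the Frenet equations (\ref{frenet1}). Conditions i.\ and ii.\ say that the connecting vector $d=\widetilde{\alpha}-\alpha$ lies in the osculating plane of $\alpha$, has length $r$, and meets $\mathbf{E}_{1}$ at the angle $\gamma$, so I would begin from the ansatz $\widetilde{\alpha}=\alpha+r(\cos\gamma\,\mathbf{E}_{1}+\sin\gamma\,\mathbf{E}_{2})$ and only at the end rewrite the coefficient as $2C/(\tau^{2}+C^{2})$ with $C=\tau\tan(\phi/2)$, which reproduces the displayed transformation. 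Differentiating the ansatz with (\ref{frenet1}) yields $\widetilde{\alpha}'=(1-r\gamma'\sin\gamma)\mathbf{E}_{1}+r(\kappa+\gamma')\cos\gamma\,\mathbf{E}_{2}+r\tau\sin\gamma\,\mathbf{E}_{3}$. Since the tangent of an admissible curve has unit first coordinate, the coefficient of $\mathbf{E}_{1}$ is the speed $\lambda=d\widetilde{s}/ds=1-r\gamma'\sin\gamma$, and dividing gives $\widetilde{\mathbf{E}}_{1}=\mathbf{E}_{1}+a\mathbf{E}_{2}+b\mathbf{E}_{3}$ with $a=r(\kappa+\gamma')\cos\gamma/\lambda$ and $b=r\tau\sin\gamma/\lambda$.

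Next I would use that in $\mathbb{G}^{3}$ every curve shares the same normal plane $\mathrm{span}(\mathbf{E}_{2},\mathbf{E}_{3})$, so the two normal frames differ by a planar rotation, which condition iii.\ fixes through the constant angle $\phi$; thus $\widetilde{\mathbf{E}}_{2}=\cos\phi\,\mathbf{E}_{2}+\sin\phi\,\mathbf{E}_{3}$ and $\widetilde{\mathbf{E}}_{3}=-\sin\phi\,\mathbf{E}_{2}+\cos\phi\,\mathbf{E}_{3}$. Computing $\widetilde{\mathbf{E}}_{1}'$ from (\ref{frenet1}) and requiring its normal part to be proportional to $\widetilde{\mathbf{E}}_{2}$ forces the ratio of its $\mathbf{E}_{3}$ and $\mathbf{E}_{2}$ components to equal $\tan\phi$, the first governing equation. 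Imposing conditions i.\ and ii.\ on $\widetilde{\alpha}$ as well---that the same segment $d$ lies in the osculating plane of $\widetilde{\alpha}$ and meets $\widetilde{\mathbf{E}}_{1}$ at the angle $\gamma$---gives a second equation, obtained by setting the $\widetilde{\mathbf{E}}_{3}$ component of $d$ equal to zero.

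Combining these two relations and substituting the expressions for $a$, $b$ and $\lambda$, I expect the $\phi$ dependence to collapse via the half-angle identity $\sin\phi\,\tan(\phi/2)=1-\cos\phi$, leaving the single first-order equation $d\gamma/ds=\tau\sin\gamma\tan(\phi/2)$; the same reduction gives the curvature relation $\kappa=-2\,d\gamma/ds$, while the constancy of $\phi$ and of the segment length forces $\tau=\sin\phi/r$ and hence $C=\tau\tan(\phi/2)$ constant. To finish I would compute the Frenet apparatus of $\widetilde{\alpha}$ from the rotated frame: differentiating $\widetilde{\mathbf{E}}_{2}$ by (\ref{frenet1}) with $\phi$ constant gives $\widetilde{\mathbf{E}}_{2}'=\tau\widetilde{\mathbf{E}}_{3}$, from which $\widetilde{\kappa}$ and $\widetilde{\tau}$ are to be compared with $\kappa$ and $\tau$; once these agree, congruence follows from the existence-and-uniqueness theorem for admissible curves in $\mathbb{G}^{3}$.

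The main obstacle will be the torsion step. Because the transformed arc length satisfies $d\widetilde{s}/ds=\lambda$, which is not identically $1$, a direct differentiation returns $\widetilde{\tau}=\tau/\lambda$, so showing that the torsion is genuinely preserved hinges on how the speed factor $\lambda=1-r\gamma'\sin\gamma$ is absorbed once the constraints $\tau=\sin\phi/r$ and the ODE for $\gamma$ are used---precisely the place where the degeneracy of the Galilean inner product, and the bookkeeping of lengths and angles in it, must be handled carefully. Checking that the binormal-angle relation and the osculating-plane relation are mutually consistent, rather than over-determining $\gamma$, is the real crux of the argument.
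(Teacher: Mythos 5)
Your plan diverges from the paper's proof at the frame correspondence, and the divergence is where the attempt breaks. The paper does not take $\widetilde{\mathbf{E}_{2}},\widetilde{\mathbf{E}_{3}}\in\mathrm{span}(\mathbf{E}_{2},\mathbf{E}_{3})$: it composes the rotations by $\gamma$ about the two binormals with a rotation by $\phi$ about the unit connecting vector $W_{1}$, arriving at the relations (\ref{trihedron1}), in which $\widetilde{\mathbf{E}_{3}}$ carries the nonzero tangential component $-\sin\gamma\sin\phi\,\mathbf{E}_{1}$ and the $\mathbf{E}_{1}$-coefficient of $\widetilde{\mathbf{E}_{1}}$ is $\cos^{2}\gamma+\sin^{2}\gamma\cos\phi$, not $1$. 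All the conclusions are then read off by differentiating one vector two ways \emph{with respect to $s$ itself}: equating $\widetilde{\mathbf{E}_{3}}^{\prime}=-\widetilde{\tau}\,\widetilde{\mathbf{E}_{2}}$ (with $\widetilde{\mathbf{E}_{2}}$ expanded via (\ref{trihedron1})) to the direct derivative of $\widetilde{\mathbf{E}_{3}}$ gives both $\widetilde{\tau}=\tau$ and $\gamma^{\prime}=\tau\sin\gamma\tan(\phi/2)$; the analogous computation for $\widetilde{\mathbf{E}_{1}},\widetilde{\mathbf{E}_{2}}$ gives $\tilde{\kappa}=\kappa=-2\gamma^{\prime}$; and comparing the derivative of (\ref{curve1}) with the $\mathbf{E}_{3}$-component of $\widetilde{\mathbf{E}_{1}}$ in (\ref{trihedron1}) gives $\tau=\sin\phi/r$. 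Throughout, the paper treats $s$ as the arclength of $\widetilde{\alpha}$ as well (that is its ``unit speed'' assertion), so no speed factor ever enters its computation.

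That speed factor is precisely your unresolved gap, and within your setup it does not resolve. Your normal-plane observation is in fact the one faithful to admissibility (all of $\mathbf{E}_{2},\mathbf{E}_{3},\widetilde{\mathbf{E}_{2}},\widetilde{\mathbf{E}_{3}}$ have vanishing first coordinate), and then constancy of $\phi$ gives $d\widetilde{\mathbf{E}_{2}}/ds=\tau\widetilde{\mathbf{E}_{3}}$ exactly, hence $\widetilde{\tau}=\tau/\lambda$ as you computed. But substituting your own constraints yields $\lambda=1-r\gamma^{\prime}\sin\gamma=1-\sin\phi\tan(\phi/2)\sin^{2}\gamma=1-2\sin^{2}(\phi/2)\sin^{2}\gamma$, which is not identically $1$ for $\phi\neq 0$ unless $\sin\gamma\equiv 0$ --- and that degenerate case kills the ODE and forces $\kappa=-2\gamma^{\prime}=0$. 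So the factor is \emph{not} absorbed, and the torsion equality, the heart of the theorem, does not follow; deferring this to ``careful bookkeeping of the degenerate inner product'' is a hope, not a step. In the Euclidean prototype the rescue comes from the $-\kappa\mathbf{T}$ term in $\mathbf{N}^{\prime}$, which changes the tangential coefficient to $1-r(\kappa+\gamma^{\prime})\sin\gamma$ and lets the ODE normalize the speed; exactly that term is absent from the Galilean Frenet equations (\ref{frenet1}). Secondarily, you never actually derive $\tilde{\kappa}=\kappa=-2\,d\gamma/ds$: your proportionality condition gives only the second-order relation $(a\tau+b^{\prime})=(\kappa+a^{\prime}-b\tau)\tan\phi$, and the claim that ``the same reduction'' yields both the curvature identity and the first-order ODE is asserted, not carried out. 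To land on the paper's conclusions you would have to adopt its relations (\ref{trihedron1}) and its identification of the two arclengths, which your own first observation shows are incompatible with the admissible-frame structure you are using; as written, the proposal cannot be completed into a proof of the stated theorem.
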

\begin{proof}
Denote by $(\mathbf{E}_{1},\mathbf{E}_{2},\mathbf{E}_{3})$ and $(\widetilde{
\mathbf{E}_{1}},\widetilde{\mathbf{E}_{2}},\widetilde{\mathbf{E}_{3}})$\ the
Frenet frames of the curves $\alpha$ and $\widetilde{\alpha }$ in the Galilean space $\mathbb{G}^{3}.$ Let $\widetilde{\mathbf{E}_{3}}$ be a unit binormal of $\widetilde{\alpha }$.

If we denote by $W_{1}$ the unit vector of $\widetilde{\alpha }-\alpha $,
then we can complete $W_{1},\mathbf{E}_{3}$ and $W_{1},\widetilde{\mathbf{E}
_{3}}$ to the positively oriented orthonormal frames $(W_{1},W_{2},W_{3})$
and $(W_{1},\widetilde{W}_{2},\widetilde{W_{3}})$ where $W_{3}=\mathbf{E}_{3},
\widetilde{W_{3}}=\widetilde{\mathbf{E}_{3}}$ and $\gamma $ is the angle
between $W_{1}$ and $\mathbf{E}_{1}.$ The frames $(W_{1},W_{2},W_{3})$ and $
(W_{1},\widetilde{W}_{2},\widetilde{W_{3}})$ can be obtained by rotating the
frames $(\mathbf{E}_{1},\mathbf{E}_{2},\mathbf{E}_{3})$ and $(\widetilde{
\mathbf{E}_{1}},\widetilde{\mathbf{E}_{2}},\widetilde{\mathbf{E}_{3}})$
around $\mathbf{E}_{3}$ and $\widetilde{\mathbf{E}_{3}}$ with an angle $\gamma$ respectively. So we
can write
\begin{equation*}
\left[ 
\begin{array}{l}
W_{1} \\ 
W_{2} \\ 
W_{3}
\end{array}
\right] =\left[ 
\begin{array}{lll}
\cos \gamma  & \sin \gamma  & 0 \\ 
-\sin \gamma  & \cos \gamma  & 0 \\ 
0 & 0 & 1
\end{array}
\right] \left[ 
\begin{array}{l}
\mathbf{E}_{1} \\ 
\mathbf{E}_{2} \\
\mathbf{E}_{3}
\end{array}
\right] 
\end{equation*}
and
\begin{equation*}
\left[ 
\begin{array}{l}
W_{1} \\ 
\widetilde{W_{2}} \\ 
\widetilde{W_{3}}
\end{array}
\right] =\left[ 
\begin{array}{lll}
\cos \gamma  & \sin \gamma  & 0 \\ 
-\sin \gamma  & \cos \gamma  & 0 \\ 
0 & 0 & 1
\end{array}
\right] \left[ 
\begin{array}{l}
\widetilde{\mathbf{E}_{1}} \\ 
\widetilde{\mathbf{E}_{2}} \\ 
\widetilde{\mathbf{E}_{3}}
\end{array}
\right] .
\end{equation*}
Similarly for a rotation around $W_{1}$ by the angle $\phi$
\begin{eqnarray*}
\widetilde{W_{2}} &=&\cos \phi W_{2}-\sin \phi W_{3} \\
\widetilde{W_{3}} &=&\sin \phi W_{2}+\cos \phi W_{3}
\end{eqnarray*}
From the above equations we write
\begin{eqnarray}
\widetilde{\mathbf{E}_{1}}&=&(\cos ^{2}\gamma +\sin ^{2}\gamma \cos \phi )
\mathbf{E}_{1}+\cos \gamma \sin \gamma (1-\cos \phi )\mathbf{E}_{2}\notag \\
&&+\sin \gamma \sin \phi \mathbf{E}_{3}\notag \\ 
\widetilde{\mathbf{E}_{2}}&=&\cos \gamma \sin \gamma (1-\cos \phi )\mathbf{E}
_{1}+(\sin ^{2}\gamma +\cos ^{2}\gamma \cosh \phi )\mathbf{E}_{2}\label{trihedron1}\\
&&-\cos \gamma \sin \phi \mathbf{E}_{3}\notag \\ 
\widetilde{\mathbf{E}_{3}}&=&-\sin \gamma \sin \phi \mathbf{E}_{1}
+\sin \phi \cos \gamma \mathbf{E}_{2}+\cos \phi \mathbf{E}_{3}\notag
\end{eqnarray}
Using (\ref{frenet1}) and (\ref{trihedron1}) for $\widetilde{\mathbf{E}_{3}}$
\begin{eqnarray*}
\frac{d\widetilde{\mathbf{E}_{3}}}{ds} &=&-\widetilde{\tau }\widetilde{
\mathbf{E}_{2}} \\
&=&(-\widetilde{\tau }\cos \gamma \sin \gamma (1-\cos \phi ))\mathbf{E}_{1}
\\
&&+(-\widetilde{\tau }(\sin ^{2}\gamma +\cos ^{2}\gamma \cos \phi ))\mathbf{E
}_{2} \\
&&+(\widetilde{\tau }\sinh \phi \cos \gamma )\mathbf{E}_{3}
\end{eqnarray*}
and taking derivative of $\widetilde{\mathbf{E}_{3}}$ in  (\ref{trihedron1}) with respect to s 
\begin{eqnarray}
\frac{d\widetilde{\mathbf{E}_{3}}}{ds} &=&(-\sin \phi \cos \gamma \frac{
d\gamma }{ds})\mathbf{E}_{1}  \notag \\
&&+(-\tau \cos \phi -\sin \gamma \sin \phi (\kappa +\frac{d\gamma }{ds}))
\mathbf{E}_{2}  \notag \\
&&+(\tau \sin \phi \cos \gamma )\mathbf{E}_{3}  \notag
\end{eqnarray}
then equating the two statements above we obtain
\begin{equation*}
\widetilde{\tau }=\tau 
\end{equation*}
\begin{equation*}
\frac{d\gamma }{ds}=\tau \sin \gamma \tanh \frac{\phi }{2}
\end{equation*}
Similarly, differentiating $\widetilde{\mathbf{E}_{1}}$ and $\widetilde{
\mathbf{E}_{2}}$ from (\ref{trihedron1}) and using  (\ref{frenet1})
\begin{equation*}
\tilde{\kappa}=\kappa =-2\frac{d\gamma }{ds}
\end{equation*}
Now $\alpha $ is a unit speed curve. Differentiating
\begin{equation*}
r^{2}=\left( \tilde{\alpha}-\alpha \right) ^{2}
\end{equation*}
and substituting the distance vector
\begin{equation}
\widetilde{\alpha }-\alpha =r(\cos \gamma \mathbf{E}_{1}+\sin \gamma \mathbf{
E}_{2})  \label{curve1}
\end{equation}
we find that $\tilde{\alpha}$ is also a unit speed curve.\\
Next taking the derivative of (\ref{curve1}) we obtain:
\begin{equation}
\widetilde{\mathbf{E}_{1}}=(1-r\sin \gamma \frac{d\gamma }{ds})\mathbf{E}
_{1}+r\cos \gamma (\kappa +\frac{d\gamma }{ds})\mathbf{E}_{2}+\tau r\sin
\gamma \mathbf{E}_{3}  \notag
\end{equation}
From this equation and the Frenet frames (\ref{trihedron1}) 
\begin{equation*}
\widetilde{\tau }=\tau =\frac{\sin \phi }{r}
\end{equation*}
Then rearranging this equality we get
\begin{equation*}
r=\frac{2\tau \tan \left( \frac{\phi }{2}\right) }{\tau ^{2}\left( 1+\tan
^{2}\left( \frac{\phi }{2}\right) \right) }
\end{equation*}
Finally with the aid of (\ref{curve1}) , naming the constant $C=\tau \tan
\left( \frac{\phi }{2}\right) $,\ the Backlund transformation of the curves is
\begin{equation*}
\widetilde{\alpha }=\alpha +\frac{2C}{\tau ^{2}+C^{2}}(\cos \gamma \mathbf{E}
_{1}+\sin \gamma \mathbf{E}_{2}).
\end{equation*}
\end{proof}

\section{\textbf{Backlund Transformations of admissible curves in the
pseudo-Galilean space }$\mathbf{G}_{1}^{3}$}
The pseudo-Galilean space $\mathbb{G}_{1}^{3}$ is the three dimensional real
affine space with the absolute figure $\{w,f,I\}$ , where $w$ is the ideal
plane, $\ f$ is a line in $w$ and $I$ is the fixed hyperbolic involution of
the points of $f.$

The scalar product of two vectors $X=(a_{1},a_{2},a_{3})$ and $
Y=(b_{1},b_{2},b_{3})$ in $\mathbb{G}_{1}^{3}$ is defined by
\begin{center}
$<X,Y>_{G}=\left\{ 
\begin{array}{ll}
a_{1}.b_{1} & a_{1}\neq 0\text{ or }b_{1}\neq 0 \\ 
a_{2}\text{\textperiodcentered }b_{2}-a_{3}\text{\textperiodcentered }b_{3},
& a_{1}=0\text{ and }b_{1}=0
\end{array}
\right. $\\
\end{center}
The curvature $\kappa (s)$ and the torsion $\mathbb{\tau }(s)$ of an
admissible curve $\alpha (s)=(s,x(s),y(s))$ in $\mathbb{G}_{1}^{3}$ are
given by
\begin{equation*}
\kappa (s)=\sqrt{\left\vert x^{\prime \prime 2}(s)-y^{\prime \prime
2}(s)\right\vert }\ \text{and }\mathbb{\tau }(s)=(\det (\alpha ^{\prime
}(s),\alpha ^{\prime \prime }(s),\alpha ^{\prime \prime \prime }(s))/\kappa
^{2}(s)\newline
\end{equation*}
respectively. The associated moving trihedron is given by 
\begin{eqnarray}
E_{1} &=&a^{\prime }(s)=(1,x^{\prime }(s),y^{\prime }(s))  \notag \\
E_{2} &=&\frac{(0,x^{\prime \prime }(s),y^{\prime \prime }(s))}{\sqrt{
\left\vert x^{\prime \prime 2}(s)-y^{\prime \prime 2}(s)\right\vert }} \notag \\
E_{3} &=&\frac{(0,\varepsilon y^{\prime \prime }(s),\varepsilon z^{\prime
\prime }(s))}{\sqrt{\left\vert x^{\prime \prime 2}(s)-y^{\prime \prime
2}(s)\right\vert }} \notag
\end{eqnarray}
where $\ \varepsilon = \mp $ 1.
The Frenet formulas in the pseudo-Galilean space $\mathbb{G}_{1}^{3}$ have
the following form:
\begin{eqnarray}
\mathbf{E}_{1}^{\prime } &=&\kappa \mathbf{E}_{2}  \notag \\
\mathbf{E}_{2}^{\prime } &=&\tau \mathbf{E}_{3}  \label{1.1} \\
\mathbf{E}_{3}^{\prime } &=&\tau \mathbf{E}_{2}  \notag
\end{eqnarray}

\subsection{\textbf{Backlund transformations of admissible curves which have
timelike binormals in the pseudo-Galilean space }$\mathbb{G}_{1}^{3}$\textbf{
:}}

\begin{theorem}
Suppose that $\psi $ is a transformation between two admissible curves $
\alpha $ and $\widetilde{\alpha }$ in the pseudo-Galilean space $\mathbb{G}
_{1}^{3}$ with $\widetilde{\alpha }=\psi (\alpha )$ such that in the
corresponding points we have:

\textbf{i.} The line segment $[\widetilde{\alpha }(s)\alpha (s)]$ at the
intersection of the osculating planes of the curves has constant length $r$

\textbf{ii. }The distance vector $\widetilde{\alpha }-\alpha $ has the same angle $
\gamma \neq \frac{\pi }{2}$ with the tangent vectors of the curves

\textbf{iii}. The timelike binormals of the curves have the same constant
angle $\phi \neq 0$. \newline
Then these curves have equal torsions
\begin{equation*}
\widetilde{\tau }=\tau =-\frac{\sinh \phi }{r}
\end{equation*}
and the Backlund transformation of the curves is
\begin{equation*}
\widetilde{\alpha }=\alpha +\frac{2C}{C^{2}-\tau ^{2}}(\cos \gamma \mathbf{E}
_{1}+\sin \gamma \mathbf{E}_{2})
\end{equation*}
where $C=\tau \tanh \left( \frac{\phi }{2}\right) $ is a constant and $\gamma $ is a solution of the differential equation
\begin{equation*}
\frac{d\gamma }{ds}=\tau\sin \gamma \tanh \frac{\phi }{2}.
\end{equation*}
\end{theorem}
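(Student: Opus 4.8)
The plan is to follow the skeleton of the proof of Theorem 1 verbatim, substituting the pseudo-Galilean data at exactly the two places where the ambient geometry enters. First I would fix the Frenet frames $(\mathbf{E}_1,\mathbf{E}_2,\mathbf{E}_3)$ and $(\widetilde{\mathbf{E}_1},\widetilde{\mathbf{E}_2},\widetilde{\mathbf{E}_3})$ of $\alpha$ and $\widetilde\alpha$, let $W_1$ be the unit vector along $\widetilde\alpha-\alpha$, and complete $W_1,\mathbf{E}_3$ and $W_1,\widetilde{\mathbf{E}_3}$ to frames $(W_1,W_2,W_3)$ and $(W_1,\widetilde W_2,\widetilde W_3)$ with $W_3=\mathbf{E}_3$, $\widetilde W_3=\widetilde{\mathbf{E}_3}$, and $\gamma$ the angle between $W_1$ and $\mathbf{E}_1$. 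The $\gamma$-rotation about the binormal acts in the plane containing the Galilean tangent direction and is the same circular rotation as before, so it yields the two $3\times 3$ matrices relating the $W$-frames to the $\mathbf{E}$-frames exactly as in Theorem 1.

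The first genuine change is the rotation about $W_1$ by the angle $\phi$. Because condition (iii) now concerns \emph{timelike} binormals, the map carrying $W_3=\mathbf{E}_3$ to $\widetilde W_3=\widetilde{\mathbf{E}_3}$ inside the Lorentzian $2$-plane spanned by $W_2$ (spacelike) and $W_3$ (timelike) is a hyperbolic rotation, so I would replace the circular rotation by
\begin{align*}
\widetilde W_2 &= \cosh\phi\, W_2-\sinh\phi\, W_3,\\
\widetilde W_3 &= -\sinh\phi\, W_2+\cosh\phi\, W_3.
\end{align*}
Composing with the $\gamma$-rotations produces the pseudo-Galilean analogue of (\ref{trihedron1}), i.e. expressions for $\widetilde{\mathbf{E}_1},\widetilde{\mathbf{E}_2},\widetilde{\mathbf{E}_3}$ in the $\mathbf{E}$-basis with every $\cos\phi,\sin\phi$ replaced by $\cosh\phi,\sinh\phi$.

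Next I would differentiate $\widetilde{\mathbf{E}_3}$ in two ways. On one hand the pseudo-Galilean Frenet equations (\ref{1.1}) give $d\widetilde{\mathbf{E}_3}/ds=\widetilde\tau\,\widetilde{\mathbf{E}_2}$ — note the \emph{plus} sign, the second place the geometry enters — and expanding $\widetilde{\mathbf{E}_2}$ writes this in the $\mathbf{E}$-basis. On the other hand, differentiating the explicit form of $\widetilde{\mathbf{E}_3}$ and again using (\ref{1.1}) for $\mathbf{E}_1',\mathbf{E}_2',\mathbf{E}_3'$ gives a second expansion. Equating coefficients yields $\widetilde\tau=\tau$ together with
\begin{equation*}
\frac{d\gamma}{ds}=\tau\sin\gamma\tanh\frac{\phi}{2},
\end{equation*}
the hyperbolic half-angle replacing the circular one. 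Finally, using that $\alpha$ is unit speed, I would differentiate $r^2=\langle\widetilde\alpha-\alpha,\widetilde\alpha-\alpha\rangle_{G}$ with the distance vector $\widetilde\alpha-\alpha=r(\cos\gamma\,\mathbf{E}_1+\sin\gamma\,\mathbf{E}_2)$, conclude $\widetilde\alpha$ is unit speed, differentiate the distance vector once more to read off $\widetilde{\mathbf{E}_1}$ in the $\mathbf{E}$-basis, and match it against the analogue of (\ref{trihedron1}) to obtain $\widetilde\tau=\tau=-\sinh\phi/r$. The identity $\sinh\phi=2\tanh(\phi/2)/(1-\tanh^2(\phi/2))$ then turns $r=-\sinh\phi/\tau$, with $C=\tau\tanh(\phi/2)$, into $r=2C/(C^2-\tau^2)$, and substituting back gives the stated transformation.

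I expect the bookkeeping of signs to be the only real obstacle: the two substitutions — the circular-to-hyperbolic rotation about $W_1$ and the sign of $\mathbf{E}_3'$ in (\ref{1.1}) — must be tracked consistently, since it is precisely these that turn the Galilean denominator $\tau^2+C^2$ into $C^2-\tau^2$ and produce the minus sign in $\tau=-\sinh\phi/r$. No new analytic difficulty arises beyond this; the structure of the argument is identical to that of Theorem 1.
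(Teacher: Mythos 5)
Your outline reproduces the paper's argument step for step: the same $W$-frames built from $W_1$, $\mathbf{E}_3$, $\widetilde{\mathbf{E}_3}$, the same circular $\gamma$-rotations, the same two-way computation of $d\widetilde{\mathbf{E}_3}/ds$ using the plus sign in $\mathbf{E}_3'=\tau \mathbf{E}_2$ from (\ref{1.1}), and the same differentiation of the distance vector to extract $r$. However, there is one concrete slip. The hyperbolic rotation you write, $\widetilde{W}_2=\cosh\phi\, W_2-\sinh\phi\, W_3$ and $\widetilde{W}_3=-\sinh\phi\, W_2+\cosh\phi\, W_3$, is the hyperbolic rotation by $-\phi$; the paper takes $\widetilde{W}_2=\cosh\phi\, W_2+\sinh\phi\, W_3$, $\widetilde{W}_3=\sinh\phi\, W_2+\cosh\phi\, W_3$, which is what produces the frame relations (\ref{curve2}). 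Since $\cosh$ is even and $\sinh$ is odd, every occurrence of $\sinh\phi$ and $\tanh(\phi/2)$ downstream flips sign under your choice. Carrying your matrices through honestly: equating $\mathbf{E}_1$-coefficients in the two expansions of $d\widetilde{\mathbf{E}_3}/ds$ gives $d\gamma/ds=-\tau\sin\gamma\tanh(\phi/2)$, and matching the $\mathbf{E}_3$-coefficient of the differentiated distance vector against your version of $\widetilde{\mathbf{E}_1}$ gives $\tau=+\sinh\phi/r$, hence $r=2C/(\tau^2-C^2)$ with $C=\tau\tanh(\phi/2)$. That is the theorem with $\phi$ replaced by $-\phi$ (equivalently $C$ by $-C$), so your interior assertions that your matrices ``yield'' $d\gamma/ds=\tau\sin\gamma\tanh(\phi/2)$ and $\widetilde\tau=\tau=-\sinh\phi/r$ do not follow from your own setup as written.

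Because the hyperbolic angle between the timelike binormals is only determined up to sign, this is a fixable convention slip rather than a structural error: with the paper's signs, everything else in your plan goes through exactly as in the paper, including the half-angle identity $\sinh\phi=2\tanh(\phi/2)/\left(1-\tanh^2(\phi/2)\right)$ and the final algebra $r=-\sinh\phi/\tau=2C/(C^2-\tau^2)$. One further small caution: your summary that the pseudo-Galilean analogue of the frame relations is obtained by ``replacing every $\cos\phi,\sin\phi$ by $\cosh\phi,\sinh\phi$'' is not quite accurate even in the paper, since the sign pattern of the $\sinh\phi$ terms in (\ref{curve2}) differs from the $\sin\phi$ terms in the Galilean case (e.g.\ the $\mathbf{E}_3$-component of $\widetilde{\mathbf{E}_1}$ changes sign); this is precisely the bookkeeping that produces $\tau=-\sinh\phi/r$ and the denominator $C^2-\tau^2$. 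Correct the two off-diagonal signs in the rotation about $W_1$, or state explicitly that you prove the statement with $\phi\mapsto-\phi$, and your proof coincides with the paper's.
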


\begin{proof}
Denote by $(\mathbf{E}_{1},\mathbf{E}_{2},\mathbf{E}_{3})$ and $(\widetilde{
\mathbf{E}_{1}},\widetilde{\mathbf{E}_{2}},\widetilde{\mathbf{E}_{3}})$\ the
Frenet frames of the curves $\alpha$ and $\widetilde{\alpha }$ in the pseudo-Galilean space 
$\mathbb{G}_{1}^{3}$ respectively. Let $\widetilde{\mathbf{E}_{3}}$ be a unit timelike
binormal of $\widetilde{\alpha }$ such that $\left\langle \widetilde{\mathbf{
E}_{3}},\widetilde{\mathbf{E}_{3}}\right\rangle =-1.$ For the rotations of frames we can write
\begin{equation*}
\left[ 
\begin{array}{l}
W_{1} \\ 
W_{2} \\ 
W_{3}
\end{array}
\right] =\left[ 
\begin{array}{lll}
\cos \gamma & \sin \gamma & 0 \\ 
-\sin \gamma & \cos \gamma & 0 \\ 
0 & 0 & 1
\end{array}
\right] \left[ 
\begin{array}{l}
\mathbf{E}_{1} \\ 
\mathbf{E}_{2} \\ 
\mathbf{E}_{3}
\end{array}
\right],
\end{equation*}
\begin{equation*}
\left[ 
\begin{array}{l}
W_{1} \\ 
\widetilde{W_{2}} \\ 
\widetilde{W_{3}}
\end{array}
\right] =\left[ 
\begin{array}{lll}
\cos \gamma & \sin \gamma & 0 \\ 
-\sin \gamma & \cos \gamma & 0 \\ 
0 & 0 & 1
\end{array}
\right] \left[ 
\begin{array}{l}
\widetilde{\mathbf{E}_{1}} \\ 
\widetilde{\mathbf{E}_{2}} \\ 
\widetilde{\mathbf{E}_{3}}
\end{array}
\right]{}
\end{equation*}
and
\begin{eqnarray*}
\widetilde{W_{2}} &=&\cosh \phi W_{2}+\sinh \phi W_{3} \\
\widetilde{W_{3}} &=&\sinh \phi W_{2}+\cosh \phi W_{3}
\end{eqnarray*}
From the equations above we can write
\begin{eqnarray}
\widetilde{\mathbf{E}_{1}} &=&(\cos ^{2}\gamma +\sin ^{2}\gamma \cosh \phi )
\mathbf{E}_{1}+\cos \gamma \sin \gamma (1-\cosh \phi )\mathbf{E}_{2}  \notag \\
&&-\sin \gamma \sinh \phi \mathbf{E}_{3}  \notag \\
\widetilde{\mathbf{E}_{2}} &=&\cos \gamma \sin \gamma (1-\cosh \phi )\mathbf{
E}_{1}+(\sin ^{2}\gamma +\cos ^{2}\gamma \cosh \phi )\mathbf{E}_{2}\label{curve2}\\
&&+\cos \gamma \sinh \phi \mathbf{E}_{3}  \notag \\
\widetilde{\mathbf{E}_{3}} &=&-\sin \gamma \sinh \phi \mathbf{E}_{1}+\sinh
\phi \cos \gamma \mathbf{E}_{2}+\cosh \phi \mathbf{E}_{3}  \notag
\end{eqnarray}
Differentiating $\widetilde{\mathbf{E}_{3}}$ with respect to the arc length
s and using the Frenet equations (\ref{1.1}) for $\widetilde{\mathbf{E}_{3}}$ we find
\begin{equation*}
\widetilde{\tau }=\tau
\end{equation*}
\begin{equation*}
\frac{d\gamma }{ds}=\tau \sin \gamma \tanh \frac{\phi }{2}
\end{equation*}
Next taking the derivative of the distance vector 
\begin{equation*}
\widetilde{\alpha }-\alpha =r(\cos \gamma \mathbf{E}_{1}+\sin \gamma \mathbf{
E}_{2})
\end{equation*}
and by (\ref{curve2}) we get
\begin{equation*}
\widetilde{\tau }=\tau =-\frac{\sinh \phi }{r}
\end{equation*}
Then rearranging the equality above
\begin{equation*}
r=\frac{2\tau \tanh \left( \frac{\phi }{2}\right) }{\tau ^{2}\left( \tanh
^{2}\left( \frac{\phi }{2}\right) -1\right) }
\end{equation*}
Finally with the aid of distance vector, naming the constant $C=\tau \tanh
\left( \frac{\phi }{2}\right) $,\ the Backlund transformation is obtained as
\begin{equation*}
\widetilde{\alpha }=\alpha +\frac{2C}{C^{2}-\tau ^{2}}(\cos \gamma \mathbf{E}
_{1}+\sin \gamma \mathbf{E}_{2})
\end{equation*}
\end{proof}

\subsection{\textbf{Backlund transformations of admissible curves which have
timelike normals in the pseudo-Galilean space }$\mathbb{G}_{1}^{3}$\textbf{:}}

\begin{theorem}
Suppose that $\psi $ is a transformation between two admissible curves $
\alpha $ and $\widetilde{\alpha }$ in the pseudo-Galilean space $\mathbb{G}
_{1}^{3}$ with $\widetilde{\alpha }=\psi (\alpha )$ such that in the
corresponding points we have:

\textbf{i.} The line segment $[\widetilde{\alpha }(s)\alpha (s)]$ at the
intersection of the osculating planes of the curves has constant length $r$

\textbf{ii. }The distance vector $\widetilde{\alpha }-\alpha $ has the same angle $
\gamma \neq 0$ with the tangent vectors of the curves

\textbf{iii}. The timelike normals of the curves have the same constant
angle $\phi \neq 0$. \newline
Then these curves have the relation between their torsions
\begin{equation*}
\widetilde{\tau }=-\tau =-\frac{\sinh \phi }{r}
\end{equation*}
and the Backlund transformation of the curves is given by
\begin{equation*}
\widetilde{\alpha }=\alpha +\frac{2C}{C^{2}-\tau ^{2}}(\cosh \gamma \mathbf{E
}_{1}+\sinh \gamma \mathbf{E}_{2})
\end{equation*}
where $C=\tau \tanh \left( \frac{\phi }{2}\right) $ is a constant and $
\gamma $ is a solution of the differential equation
\begin{equation*}
\frac{d\gamma }{ds}=-\tau \sinh \gamma \tanh \frac{\phi }{2}
\end{equation*}
\end{theorem}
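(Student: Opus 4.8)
The plan is to adapt the frame-rotation argument from the timelike-binormal case (Theorem with $\widetilde{\tau}=\tau=-\sinh\phi/r$), interchanging the causal roles of the normal and the binormal. Here the normal $\mathbf{E}_2$ is the timelike vector, so $\langle\mathbf{E}_2,\mathbf{E}_2\rangle=-1$ and $\mathbf{E}_3$ is spacelike, while $\mathbf{E}_1$ carries the Galilean unit norm; the Frenet equations (\ref{1.1}) still hold. First I would let $W_1$ be the unit vector along $\widetilde{\alpha}-\alpha$. Since the distance vector lies in the osculating plane and condition (ii) now measures a hyperbolic angle $\gamma$ in the Lorentzian $\mathbf{E}_1$--$\mathbf{E}_2$ plane, the auxiliary frame $(W_1,W_2,W_3)$ is obtained from $(\mathbf{E}_1,\mathbf{E}_2,\mathbf{E}_3)$ by a boost with matrix $\left(\begin{smallmatrix}\cosh\gamma&\sinh\gamma&0\\\sinh\gamma&\cosh\gamma&0\\0&0&1\end{smallmatrix}\right)$ fixing $W_3=\mathbf{E}_3$, and analogously for $(W_1,\widetilde{W_2},\widetilde{W_3})$ from the tilde-frame. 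The second rotation, a boost around $W_1$ by the angle $\phi$ supplied by condition (iii) (note $W_2$ is timelike, $W_3$ spacelike after the first boost), gives $\widetilde{W_2}=\cosh\phi\,W_2+\sinh\phi\,W_3$ and $\widetilde{W_3}=\sinh\phi\,W_2+\cosh\phi\,W_3$. Composing the two boosts expresses $\widetilde{\mathbf{E}_1},\widetilde{\mathbf{E}_2},\widetilde{\mathbf{E}_3}$ as explicit $\cosh\gamma$/$\sinh\gamma$ combinations of $\mathbf{E}_1,\mathbf{E}_2,\mathbf{E}_3$, the analogue of (\ref{curve2}).

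Next I would compute the derivative of $\widetilde{\mathbf{E}_3}$ in two ways: once via the Frenet rule, which by (\ref{1.1}) reads $\frac{d\widetilde{\mathbf{E}_3}}{ds}=\widetilde{\tau}\,\widetilde{\mathbf{E}_2}$ with $\widetilde{\mathbf{E}_2}$ substituted from the composed relations, and once by differentiating the $\mathbf{E}_i$-expansion of $\widetilde{\mathbf{E}_3}$ directly using (\ref{1.1}). Equating the two and matching the coefficients of $\mathbf{E}_1,\mathbf{E}_2,\mathbf{E}_3$ should give the two scalar identities. This coefficient comparison is exactly where the timelike-normal sign conventions enter, and I expect it to produce $\widetilde{\tau}=-\tau$ together with the evolution equation $\frac{d\gamma}{ds}=-\tau\sinh\gamma\tanh\frac{\phi}{2}$.

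I would then bring in the length condition. Writing the distance vector as $\widetilde{\alpha}-\alpha=r(\cosh\gamma\,\mathbf{E}_1+\sinh\gamma\,\mathbf{E}_2)$, consistent with $W_1$ as its unit direction, and differentiating with (\ref{1.1}) (using that both curves are unit speed, so $\widetilde{\alpha}'=\widetilde{\mathbf{E}_1}$) yields an expression for $\widetilde{\mathbf{E}_1}$; comparing it with the composed expansion of $\widetilde{\mathbf{E}_1}$ forces $\widetilde{\tau}=-\tau=-\frac{\sinh\phi}{r}$. Solving this for $r=\frac{2\tau\tanh(\phi/2)}{\tau^2(\tanh^2(\phi/2)-1)}$, setting $C=\tau\tanh(\phi/2)$, and substituting back into the distance vector gives the stated transformation $\widetilde{\alpha}=\alpha+\frac{2C}{C^2-\tau^2}(\cosh\gamma\,\mathbf{E}_1+\sinh\gamma\,\mathbf{E}_2)$.

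The main obstacle is purely the sign bookkeeping. Unlike the two previous theorems, where $\widetilde{\tau}=\tau$, the relation here reverses to $\widetilde{\tau}=-\tau$, and the only way this can emerge is from the combined effect of the timelike normal ($\langle\mathbf{E}_2,\mathbf{E}_2\rangle=-1$) and the hyperbolic form $\mathbf{E}_3'=\tau\mathbf{E}_2$ in (\ref{1.1}). I would therefore keep the causal signs explicit through both boosts and be especially careful in the coefficient matching for $\frac{d\widetilde{\mathbf{E}_3}}{ds}$, since a single misplaced $\pm$ there would spuriously restore $\widetilde{\tau}=+\tau$ and break consistency with the length relation $\widetilde{\tau}=-\sinh\phi/r$.
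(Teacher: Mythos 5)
Your overall skeleton is the same as the paper's: build $(W_1,W_2,W_3)$ and $(W_1,\widetilde{W_2},\widetilde{W_3})$ by the hyperbolic rotation $\left(\begin{smallmatrix}\cosh\gamma&\sinh\gamma&0\\\sinh\gamma&\cosh\gamma&0\\0&0&1\end{smallmatrix}\right)$, compose with a second rotation in the $W_2W_3$-plane, differentiate $\widetilde{\mathbf{E}_3}$ two ways, then differentiate the distance vector and massage $r$ into the $C$-form. But the one concrete sign you commit to is the wrong one, and it is fatal. You take the second rotation to be $\widetilde{W_2}=\cosh\phi\,W_2+\sinh\phi\,W_3$, $\widetilde{W_3}=\sinh\phi\,W_2+\cosh\phi\,W_3$ --- i.e.\ the same matrix as in the timelike-binormal theorem --- whereas the paper's proof of this theorem uses the asymmetric pair $\widetilde{W_2}=\cosh\phi\,W_2+\sinh\phi\,W_3$, $\widetilde{W_3}=-\sinh\phi\,W_2+\cosh\phi\,W_3$. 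With your choice one gets $\widetilde{\mathbf{E}_3}=+\sinh\gamma\sinh\phi\,\mathbf{E}_1+\cosh\gamma\sinh\phi\,\mathbf{E}_2+\cosh\phi\,\mathbf{E}_3$, while the paper's (\ref{frenet3}) has $\widetilde{\mathbf{E}_3}=-\sinh\gamma\sinh\phi\,\mathbf{E}_1-\sinh\phi\cosh\gamma\,\mathbf{E}_2+\cosh\phi\,\mathbf{E}_3$. Now run your own decisive step: differentiating your $\widetilde{\mathbf{E}_3}$ with (\ref{1.1}) gives $\mathbf{E}_3$-component $\tau\cosh\gamma\sinh\phi$ (from $\mathbf{E}_2'=\tau\mathbf{E}_3$), and since $\widetilde{W_2}$ is unchanged your $\widetilde{\mathbf{E}_2}$ agrees with the paper's, so $\widetilde{\tau}\,\widetilde{\mathbf{E}_2}$ has $\mathbf{E}_3$-component $\widetilde{\tau}\cosh\gamma\sinh\phi$. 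Matching forces $\widetilde{\tau}=+\tau$ --- exactly the ``spurious'' outcome you warn against in your last paragraph --- and likewise $\frac{d\gamma}{ds}=+\tau\sinh\gamma\tanh\frac{\phi}{2}$, not the stated equation. Only the paper's $-\sinh\phi$ entry flips the $\mathbf{E}_3$-component of $d\widetilde{\mathbf{E}_3}/ds$ to $-\tau\cosh\gamma\sinh\phi$ and yields $\widetilde{\tau}=-\tau$.

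Note also that this is not recoverable by ``keeping the causal signs explicit'' within your stated framework. Since your $\widetilde{W_2}$, hence $\widetilde{\mathbf{E}_1}$, coincides with the paper's, the distance-vector step still gives $r\tau\sinh\gamma=-\sinh\gamma\sinh\phi$, i.e.\ $\tau=-\sinh\phi/r$; combined with your $\widetilde{\tau}=+\tau$ you would land precisely on the timelike-binormal theorem's conclusion $\widetilde{\tau}=\tau$, contradicting the statement being proved. Moreover, your appeal to a genuine Lorentz boost cannot save this: replacing $\phi$ by $-\phi$ in the boost flips the sign in $\widetilde{W_2}$ as well, and the two sign changes cancel in the coefficient matching, again giving $\widetilde{\tau}=+\tau$. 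The reversal $\widetilde{\tau}=-\tau$ in the paper comes only from its mixed pair $(\widetilde{W_2},\widetilde{W_3})=(\cosh\phi\,W_2+\sinh\phi\,W_3,\;-\sinh\phi\,W_2+\cosh\phi\,W_3)$, whose matrix $\left(\begin{smallmatrix}\cosh\phi&\sinh\phi\\-\sinh\phi&\cosh\phi\end{smallmatrix}\right)$ has determinant $\cosh 2\phi$ and so is not an $SO(1,1)$ boost at all; the sign conventions here are genuinely asymmetric between the two pseudo-Galilean theorems, not a matter of bookkeeping care. (Incidentally, the paper itself is loose by a sign at this point: its proof derives $\widetilde{\tau}=-\tau=+\sinh\phi/r$ while the theorem statement reads $-\sinh\phi/r$ --- further evidence that the $\pm$ you flagged is exactly where the argument lives.) To prove the statement as written you must adopt the paper's second transformation, at which point the rest of your outline (the $\widetilde{\mathbf{E}_1}$ comparison, $r=\frac{2\tau\tanh(\phi/2)}{\tau^{2}(\tanh^{2}(\phi/2)-1)}$, and the substitution $C=\tau\tanh(\phi/2)$) goes through as you describe.
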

\begin{proof}
Denote by $(\mathbf{E}_{1},\mathbf{E}_{2},\mathbf{E}_{3})$ and $(\widetilde{
\mathbf{E}_{1}},\widetilde{\mathbf{E}_{2}},\widetilde{\mathbf{E}_{3}})$\ the
Frenet frames of the curves $\alpha$ and $\widetilde{\alpha }$ in the pseudo-Galilean space 
$\mathbb{G}_{1}^{3}$ respectively.
Let $\widetilde{\mathbf{E}_{2}}$ be a unit timelike normal of $\widetilde{\alpha }
$ such that $\left\langle \widetilde{\mathbf{E}_{2}},\widetilde{\mathbf{E}
_{2}}\right\rangle =-1.$\newline
Again by the notation of previous proof it can be writen
\begin{equation*}
\left[ 
\begin{array}{l}
W_{1} \\ 
W_{2} \\ 
W_{3}
\end{array}
\right] =\left[ 
\begin{array}{lll}
\cosh \gamma & \sinh \gamma & 0 \\ 
\sinh \gamma & \cosh \gamma & 0 \\ 
0 & 0 & 1
\end{array}
\right] \left[ 
\begin{array}{l}
\mathbf{E}_{1} \\ 
\mathbf{E}_{2} \\ 
\mathbf{E}_{3}
\end{array}
\right],
\end{equation*}
\begin{equation*}
\left[ 
\begin{array}{l}
W_{1} \\ 
\widetilde{W_{2}} \\ 
\widetilde{W_{3}}
\end{array}
\right] =\left[ 
\begin{array}{lll}
\cosh \gamma & \sinh \gamma & 0 \\ 
\sinh \gamma & \cosh \gamma & 0 \\ 
0 & 0 & 1
\end{array}
\right] \left[ 
\begin{array}{l}
\widetilde{\mathbf{E}_{1}} \\ 
\widetilde{\mathbf{E}_{2}} \\ 
\widetilde{\mathbf{E}_{3}}
\end{array}
\right]
\end{equation*}
and
\begin{eqnarray*}
\widetilde{W_{2}} &=&\cosh \phi W_{2}+\sinh \phi W_{3} \\
\widetilde{W_{3}} &=&-\sinh \phi W_{2}+\cosh \phi W_{3}
\end{eqnarray*}
From the above equations we write
\begin{eqnarray}\label{frenet3}
\widetilde{\mathbf{E}_{1}} &=&(\cosh ^{2}\gamma -\sinh ^{2}\gamma \cosh \phi
)\mathbf{E}_{1}+\cosh \gamma \sinh \gamma (1-\cosh \phi )\mathbf{E}_{2}\notag \\
&&-\sinh \gamma \sinh \phi \mathbf{E}_{3}  \notag \\
\widetilde{\mathbf{E}_{2}} &=&\cosh \gamma \sinh \gamma (-1+\cosh \phi )
\mathbf{E}_{1}+(-\sinh ^{2}\gamma +\cosh ^{2}\gamma \cosh \phi )\mathbf{E}_{2} \\
&&+\cosh \gamma \sinh \phi \mathbf{E}_{3}  \notag \\
\widetilde{\mathbf{E}_{3}} &=&-\sinh \gamma \sinh \phi \mathbf{E}_{1}-\sinh
\phi \cosh \gamma \mathbf{E}_{2}+\cosh \phi \mathbf{E}_{3}  \notag
\end{eqnarray}
Differentiating $\widetilde{\mathbf{E}_{3}}$ with respect to the arc length
s and using Frenet equation for $\widetilde{\mathbf{E}_{3}}$ we find
\begin{equation*}
\widetilde{\tau }=-\tau
\end{equation*}
\begin{equation*}
\frac{d\gamma }{ds}=-\tau\sinh \gamma \tanh \frac{\phi }{2}
\end{equation*}
Next taking the derivative of the distance vector 
\begin{equation*}
\widetilde{\alpha }-\alpha =r(\cosh \gamma \mathbf{E}_{1}+\sinh \gamma 
\mathbf{E}_{2})
\end{equation*}
and from (\ref{frenet3}) it can be found 
\begin{equation}
\widetilde{\tau }=-\tau =\frac{\sinh \phi }{r}\notag
\end{equation}
Then rearranging the equality above we get
\begin{equation*}
r=\frac{2\tau \tanh \left( \frac{\phi }{2}\right) }{\tau ^{2}\left( \tanh
^{2}\left( \frac{\phi }{2}\right) -1\right) }
\end{equation*}
Finally with the aid of distance vector, naming the constant $C=\tau \tanh
\left( \frac{\phi }{2}\right) $,\ the transformation is obtained as
\begin{equation*}
\widetilde{\alpha }=\alpha +\frac{2C}{C^{2}-\tau ^{2}}(\cosh \gamma \mathbf{E
}_{1}+\sinh \gamma \mathbf{E}_{2})
\end{equation*}
\end{proof}
\section{\textbf{Spatial Backlund transformations of curves in Galilean
space G}$^{4}$}
The Galilean space $\mathbb{G}^{4}$ consists of a four dimensional real affine space endowed with global absolute time and Euclidean metric structure E over the simultaneity hyperplanes defined as the three-dimensional real affine spaces with underlying vector space Ker(t) of the absolute time functional which is a non zero linear functional t : V $\to$ R on the underlying vector
space V of E.

The scalar product of two vectors $X=(a_{1},a_{2},a_{3},a_{4})$ and $
Y=(b_{1},b_{2},b_{3},b_{4})$ in $\mathbb{G}^{4}$ is defined by
\begin{center}
$<X,Y>_{G}=\left\{ 
\begin{array}{ll}
a_{4}.b_{4} & a_{4}\neq 0\text{ or }b_{4}\neq 0 \\
a_{1}\text{\textperiodcentered }b_{1}+a_{2}\text{\textperiodcentered }
b_{2}+a_{3}\text{\textperiodcentered }b_{3}, & a_{4}=0\text{ and }b_{4}=0
\end{array}
\right. $\\
\end{center}
Let $\alpha (s)=(x(s),y(s),z(s),t(s))$ be the position vector
of a curve. Then the condition $\alpha ^{\prime }=E_{1},|E_{1}|=1$ is
equivalent to the condition $t\left( s\right) =s.$ Thus natural equations
of a curve $\alpha (s)=(x(s),y(s),z(s),s)$ in ${G}^{4}$ are
\begin{center}
$\kappa (s)=\sqrt{x^{\prime \prime 2}(s)+y^{\prime \prime 2}(s)+z^{\prime
\prime 2}(s)}\ $\\
$\tau (s)=(\det (\alpha ^{\prime }(s),\alpha
^{\prime \prime }(s),\alpha ^{\prime \prime \prime }(s))/\kappa ^{2}(s)$.
\end{center}

The unit tangent vector, the unit normal vector, the unit binormal vector
and the temporal vector (of the time axis) of the curve are shown by $
E_{1},E_{2},E_{3},E_{4}$ respectively. Thus
\begin{eqnarray*}
E_{1} &=&\alpha ^{\prime }(s)=(x^{\prime }(s),y^{\prime }(s),z^{\prime
}(s),1) \\
E_{2} &=&\frac{E~_{1}^{\prime }}{\kappa \left( s\right) } \\
E_{3} &=&\frac{E\ _{2}^{\prime }}{\tau \left( s\right) } \\
E_{4} &=&\mu \ E_{1}\wedge E_{2}\wedge E_{3}
\end{eqnarray*}
where $\mu $ is chosen as $\mp $1 for det$(E_{1},E_{2},E_{3},E_{4}) $ to be 1.

The Frenet equations in the Galilean 4-space with the spatial Frenet vectors $
E_{1},E_{2},E_{3}$ and the temporal vector $E_{4}$ are given by
\begin{eqnarray}
\mathbf{E}_{1}^{\prime } &=&\kappa \mathbf{E}_{2} \notag \\
\mathbf{E}_{2}^{\prime } &=&-\kappa \mathbf{E}_{1}+\tau \mathbf{E}_{3} \\
\mathbf{E}_{3}^{\prime } &=&-\tau \mathbf{E}_{2} \notag\\
\mathbf{E}_{4}^{\prime } &=&-\sigma \mathbf{E}_{3} \notag
 \label{frenet4}
\end{eqnarray}

Galilean geometry is the study of properties of figures that are invariant
under the Galilean transformations. In general a Galilean transformation in $
n$ spatial dimensions takes the $( n+1) $-vector $( \mathbf{u
},t) $ to the $( n+1) $-vector $( R\mathbf{u}+vt+a, t+a_{0}) $
where $R \in SO(n), v \in R^{n}$and $a \in R^{n}$.
Particularly in $\mathbb{G}^{4}$, a spatial rotation of reference
frame happens for the plane spanned by two spatial axes holding the other
plane stationary.

\subsection{\textbf{Spatial Backlund transformations of curves in the Galilean 4-space:}}

\begin{theorem}
Suppose that $\psi $ is a transformation between two curves $\alpha $ and $
\widetilde{\alpha }$ in the Galilean space $\mathbb{G}^{4}$ with $\widetilde{
\alpha }=\psi (\alpha ).$ We have:
\begin{enumerate}[i.]
\item  The line segment $[\widetilde{\alpha }(s)\alpha (s)]$ at the
intersection of the osculating planes of the curves has constant length $r$
\item The distance vector $\widetilde{\alpha }-\alpha $ has the same Euclidean
angle $\gamma \neq \frac{\pi }{2}$ with the tangent vectors of the curves
\item The binormals of the curves have the same constant Euclidean
angle $\phi \neq 0$.
\end{enumerate}
Then curvatures, torsions and the spatial Backlund transformation of the curves are given by
\begin{eqnarray*}
\tilde{\kappa} &=&-\kappa -2\frac{d\gamma }{ds} \\
\widetilde{\tau } &=&\tau =\frac{\sin \phi }{r} \\
\widetilde{\alpha } &=&\alpha +\frac{2C}{\tau^{2}+C^{2}}(\cos \gamma 
\mathbf{E}_{1}+\sin \gamma \mathbf{E}_{2})
\end{eqnarray*}
where the Backlund parameter is $C=\tau \tan \left( \frac{\phi }{2}\right)$ and $\gamma$ is a solution of the differential equation $\frac{d\gamma }{ds} =\tau \sin \gamma \tan \left( \frac{\phi }{2}\right)-\kappa$.
\end{theorem}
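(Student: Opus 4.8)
The plan is to follow the scheme of the Galilean $\mathbb{G}^{3}$ theorem, the only structural change being that the Frenet equations of $\mathbb{G}^{4}$ carry the extra coupling term $-\kappa\mathbf{E}_{1}$ in $\mathbf{E}_{2}^{\prime}=-\kappa\mathbf{E}_{1}+\tau\mathbf{E}_{3}$. First I would fix the Frenet frames $(\mathbf{E}_{1},\mathbf{E}_{2},\mathbf{E}_{3},\mathbf{E}_{4})$ and $(\widetilde{\mathbf{E}_{1}},\widetilde{\mathbf{E}_{2}},\widetilde{\mathbf{E}_{3}},\widetilde{\mathbf{E}_{4}})$ and introduce the auxiliary orthonormal frames $(W_{1},W_{2},W_{3})$ and $(W_{1},\widetilde{W_{2}},\widetilde{W_{3}})$, where $W_{1}$ is the unit distance vector, $W_{3}=\mathbf{E}_{3}$ and $\widetilde{W_{3}}=\widetilde{\mathbf{E}_{3}}$. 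Because the transformation is \emph{spatial}, these frames are obtained by a rotation through the angle $\gamma$ in the $\mathbf{E}_{1}\mathbf{E}_{2}$-plane followed by a rotation through $\phi$ in the $W_{2}W_{3}$-plane, the temporal vector $\mathbf{E}_{4}$ being held fixed. Composing the two rotations reproduces verbatim the frame relations (\ref{trihedron1}) of the $\mathbb{G}^{3}$ case, since those rotations involve only $\mathbf{E}_{1},\mathbf{E}_{2},\mathbf{E}_{3}$.

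With these relations in hand, I would differentiate $\widetilde{\mathbf{E}_{3}}$ in two ways. On one side $\frac{d\widetilde{\mathbf{E}_{3}}}{ds}=-\widetilde{\tau}\,\widetilde{\mathbf{E}_{2}}$, expanded through the frame relations; on the other side I differentiate the explicit expression for $\widetilde{\mathbf{E}_{3}}$ and substitute the $\mathbb{G}^{4}$ Frenet equations, where the term $-\kappa\mathbf{E}_{1}$ now contributes. Equating the $\mathbf{E}_{3}$-components gives $\widetilde{\tau}=\tau$, while equating the $\mathbf{E}_{1}$-components produces $\sin\phi\,(\frac{d\gamma}{ds}+\kappa)=\tau\sin\gamma(1-\cos\phi)$, that is the advertised equation $\frac{d\gamma}{ds}=\tau\sin\gamma\tan\frac{\phi}{2}-\kappa$; the $-\kappa$ shift is exactly the footprint of the new coupling term.

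For the curvatures I would differentiate $\widetilde{\mathbf{E}_{1}}$ using the $\mathbb{G}^{4}$ Frenet equations and set the result equal to $\widetilde{\kappa}\,\widetilde{\mathbf{E}_{2}}$; reading off the $\mathbf{E}_{1}$-component (again fed by $-\kappa\mathbf{E}_{1}$ through $\mathbf{E}_{2}^{\prime}$) yields $\widetilde{\kappa}=-\kappa-2\frac{d\gamma}{ds}$. Finally, writing the distance vector as $\widetilde{\alpha}-\alpha=r(\cos\gamma\,\mathbf{E}_{1}+\sin\gamma\,\mathbf{E}_{2})$ and checking, as in the $\mathbb{G}^{3}$ proof, that $\widetilde{\alpha}$ is again unit speed, I would differentiate it so that $\widetilde{\mathbf{E}_{1}}=\widetilde{\alpha}^{\prime}$ acquires a second expression; comparing its $\mathbf{E}_{3}$-component with the frame formula for $\widetilde{\mathbf{E}_{1}}$ gives $r\tau\sin\gamma=\sin\gamma\sin\phi$, hence $\widetilde{\tau}=\tau=\frac{\sin\phi}{r}$. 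Solving for $r$ and naming $C=\tau\tan\frac{\phi}{2}$, the half-angle identity $\sin\phi=\frac{2\tan(\phi/2)}{1+\tan^{2}(\phi/2)}$ turns $r=\frac{\sin\phi}{\tau}$ into $r=\frac{2C}{\tau^{2}+C^{2}}$, which delivers the transformation formula.

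The main obstacle is essentially bookkeeping: carrying the extra $-\kappa\mathbf{E}_{1}$ term cleanly through every differentiation, since it is what simultaneously shifts the $\gamma$-equation by $-\kappa$ and changes the curvature law from $\widetilde{\kappa}=\kappa$ (the $\mathbb{G}^{3}$ outcome) to $\widetilde{\kappa}=-\kappa-2\frac{d\gamma}{ds}$. A secondary point to justify is that a spatial rotation leaves the temporal vector $\mathbf{E}_{4}$ invariant and that the distance vector stays in the osculating $\mathbf{E}_{1}\mathbf{E}_{2}$-plane, so that the whole argument collapses onto the three-vector computation of the $\mathbb{G}^{3}$ theorem; granting this reduction, no genuinely new estimate is required.
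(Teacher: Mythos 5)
Your proposal is correct and follows essentially the same route as the paper: the same composed rotations (through $\gamma$ in the $\mathbf{E}_{1}\mathbf{E}_{2}$-plane, then $\phi$ in the $W_{2}W_{3}$-plane, with $\mathbf{E}_{4}$ fixed) reproducing the $\mathbb{G}^{3}$ frame relations, followed by differentiating $\widetilde{\mathbf{E}_{3}}$, $\widetilde{\mathbf{E}_{1}}$ and the distance vector against the $\mathbb{G}^{4}$ Frenet equations whose extra $-\kappa\mathbf{E}_{1}$ term yields the shifted relations $\frac{d\gamma}{ds}=\tau\sin\gamma\tan\frac{\phi}{2}-\kappa$ and $\tilde{\kappa}=-\kappa-2\frac{d\gamma}{ds}$. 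The only difference is a harmless reshuffling of which component comparison delivers which identity (you extract the $\gamma$-equation from the $\mathbf{E}_{1}$-component of $\widetilde{\mathbf{E}_{3}}{}^{\prime}$ and $\tau=\frac{\sin\phi}{r}$ from the $\mathbf{E}_{3}$-component of the distance-vector derivative, whereas the paper pairs them the other way), which is immaterial since the system is consistent either way.
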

\begin{proof}
Denote by $(\mathbf{E}_{1},\mathbf{E}_{2},\mathbf{E}_{3},\mathbf{E}_{4})$
and $(\widetilde{\mathbf{E}_{1}},\widetilde{\mathbf{E}_{2}},\widetilde{\mathbf{E}
_{3}},\widetilde{\mathbf{E}_{4}})$\ the Frenet frame of the curves $\alpha $ and $
\widetilde{\alpha }$ in the Galilean space $\mathbb{G}^{4}$ respectively. Let $\widetilde{
\mathbf{E}_{3}}$ be the unit binormal of $\widetilde{\alpha }$.

If we denote by $W_{1}$ the unit vector of $\widetilde{\alpha }-\alpha $,
then we can complete $W_{1},\mathbf{E}_{3},\mathbf{E}_{4}$ and $W_{1},
\widetilde{\mathbf{E}_{3}},\widetilde{\mathbf{E}_{4}}$ to the positively
oriented orthonormal frames $(W_{1},W_{2},W_{3},W_{4})$ and $(W_{1},
\widetilde{W_{2}},\widetilde{W_{3}},\widetilde{W_{4}})$ where $W_{3}=\mathbf{
E}_{3},\widetilde{W_{3}}=\widetilde{\mathbf{E}_{3}},W_{4}=\mathbf{E}_{4},
\widetilde{W_{4}}=\widetilde{\mathbf{E}_{4}}$. For a spatial rotation of the $E_{1}E_{2}$ plane holding the other plane constant we can write
\begin{equation*}
\left[ 
\begin{array}{l}
W_{1} \\ 
W_{2} \\ 
W_{3} \\ 
W_{4}
\end{array}
\right] =\left[ 
\begin{array}{cccc}
\cos \gamma & \sin \gamma & 0 & 0 \\ 
-\sin \gamma & \cos \gamma & 0 & 0 \\ 
0 & 0 & 1 & 0 \\ 
0 & 0 & 0 & 1
\end{array}
\right] \left[ 
\begin{array}{l}
\mathbf{E}_{1} \\ 
\mathbf{E}_{2} \\ 
\mathbf{E}_{3} \\ 
\mathbf{E}_{4}
\end{array}
\right]
\end{equation*}
and similarly for $\widetilde{E_{1}}\widetilde{E_{2}}$ plane
\begin{equation*}
\left[ 
\begin{array}{l}
W_{1} \\ 
\widetilde{W_{2}} \\ 
\widetilde{W_{3}} \\ 
\widetilde{W_{4}}
\end{array}
\right] =\left[ 
\begin{array}{cccc}
\cos \gamma & \sin \gamma & 0 & 0 \\ 
-\sin \gamma & \cos \gamma & 0 & 0 \\ 
0 & 0 & 1 & 0 \\ 
0 & 0 & 0 & 1
\end{array}
\right] \left[ 
\begin{array}{l}
\widetilde{\mathbf{E}_{1}} \\ 
\widetilde{\mathbf{E}_{2}} \\ 
\widetilde{\mathbf{E}_{3}} \\ 
\widetilde{\mathbf{E}_{4}}
\end{array}
\right] .
\end{equation*}
Also we can rotate spatially the $\widetilde{W_{2}}\widetilde{W_{3}}$ plane by the transformation 
\begin{equation*}
\left[ 
\begin{array}{l}
W_{1} \\ 
\widetilde{W_{2}} \\ 
\widetilde{W_{3}} \\ 
\widetilde{W_{4}}
\end{array}
\right] =\left[ 
\begin{array}{cccc}
1 & 0 & 0 & 0 \\ 
0 & \cos \phi & -\sin \phi & 0 \\ 
0 & \sin \phi & \cos \phi & 0 \\ 
0 & 0 & 0 & 1
\end{array}
\right] \left[ 
\begin{array}{l}
W_{1} \\ 
W_{2} \\ 
W_{3} \\ 
W_{4}
\end{array}
\right]
\end{equation*}
From the above equations we find the Frenet vectors
\begin{eqnarray}
\widetilde{\mathbf{E}_{1}}&=&(\cos ^{2}\gamma +\sin ^{2}\gamma \cos \phi )
\mathbf{E}_{1}+\cos \gamma \sin \gamma (1-\cos \phi )\mathbf{E}_{2}\notag \\ &&+\sin
\gamma \sin \phi \mathbf{E}_{3}\notag \\ 
\widetilde{\mathbf{E}_{2}}&=&\cos \gamma \sin \gamma (1-\cos \phi )\mathbf{E}
_{1}+(\sin ^{2}\gamma +\cos ^{2}\gamma \cos \phi )\mathbf{E}_{2}\label{frenet4}\\ 
&&-\cos \gamma \sin \phi \mathbf{E}_{3} \notag \\ 
\widetilde{\mathbf{E}_{3}}&=&-\sin \gamma \sin \phi \mathbf{E}_{1}+\sin \phi
\cos \gamma \mathbf{E}_{2}+\cos \phi \mathbf{E}_{3}\notag \\ 
\widetilde{\mathbf{E}_{4}}&=&\mathbf{E}_{4} \notag
\end{eqnarray}
Since $\alpha $ is a unit speed curve, differentiating the below
\begin{equation*}
||\tilde{\alpha}-\alpha ||^{2}=<\tilde{\alpha}-\alpha ,\tilde{\alpha}-\alpha
>=r^{2}
\end{equation*}
and substituting the distance vector
\begin{equation*}
\widetilde{\alpha }-\alpha =r(\cos \gamma \mathbf{E}_{1}+\sin \gamma \mathbf{
E}_{2})
\end{equation*}
we find that $\tilde{\alpha}$ is also a unit speed curve. By the derivative
of above equation and of $\widetilde{\mathbf{E}_{1}}$ with respect to the
arclength s we find
\begin{eqnarray*}
\tau &=&\frac{\sin \phi }{r} \\
\frac{d\gamma }{ds} &=&\tau \sin \gamma \tan \left( \frac{\phi }{2}\right)
-\kappa
\end{eqnarray*}
Also from the derivative of $\widetilde{\mathbf{E}_{3}}$ and use of Frenet
equations gives
\begin{equation*}
\tilde{\tau}=\tau
\end{equation*}
A similar approach for $\widetilde{\mathbf{E}_{1}}$ results in the equality
\begin{equation*}
\tilde{\kappa}=-\kappa -2\frac{d\gamma }{ds}
\end{equation*}
Hence the transformation of the curves becomes
\begin{equation*}
\widetilde{\alpha }=\alpha +\frac{2C}{\tau ^{2}+C^{2}}(\cos \gamma \mathbf{E}
_{1}+\sin \gamma \mathbf{E}_{2})
\end{equation*}
with $C=\tau\tan (\frac{\phi }{2}).$
\end{proof}

\section{\textbf{References}}

\end{document}